\documentclass[oneside,english]{amsart}
\usepackage[T1]{fontenc}
\usepackage[latin9]{inputenc}
\usepackage{geometry}
\usepackage{amstext}
\usepackage{amsthm}
\usepackage{amssymb}

\theoremstyle{definition}
\newtheorem{theorem}{Theorem}[section]
\newtheorem{lemma}[theorem]{Lemma}
\newtheorem{cor}[theorem]{Corollary}
\newtheorem{rem}[theorem]{Remark}

\newcommand{\Z}{\mathbb{Z}}
\newcommand{\R}{\mathbb{R}}
\newcommand{\C}{\mathbb{C}}
\newcommand{\PP}{\mathcal{P}}

\newcommand{\innerp}[2]{\big \langle #1 , \, #2\big \rangle}
\newcommand{\innerpsmall}[2]{ \langle #1 , \, #2 \rangle}
\newcommand*\diff{\mathop{}\!\mathrm{d}}

\DeclareMathOperator{\vol}{vol}

\usepackage{hyperref}
\hypersetup{colorlinks,% 
citecolor=red,% 
linkcolor=blue,% 
}

\begin{document}

\subjclass[2010]{Primary: 42B10; Secondary: 32A60}

\title[The null set of a polytope, and the Pompeiu property for polytopes]
{The null set of a polytope,\\ and the Pompeiu property for polytopes}

\begin{abstract}
We study the null set $N(\mathcal{P})$ of the Fourier-Laplace transform of a polytope $\mathcal{P} \subset \mathbb{R}^d$, and we find that  $N(\mathcal{P})$ does not contain (almost all) circles in $\mathbb{R}^d$. As a consequence, the null set does not contain the algebraic varieties $\{z \in \mathbb{C}^d \mid z_1^2 + \dots + z_d^2 = \alpha^2\}$ for each fixed $\alpha \in \mathbb{C}$, and hence we get an explicit proof that the Pompeiu property is true for all polytopes. Our proof uses the Brion-Barvinok theorem, which gives a concrete formulation for the Fourier-Laplace transform of a polytope, and it also uses properties of Bessel functions. 

The original proof that polytopes (as well as other bodies) possess the Pompeiu property was given by Brown, Schreiber, and Taylor~\cite{brown73} for dimension $2$. Williams~\cite[p. 184]{williams76} later observed that the same proof also works for $d>2$ and, using eigenvalues of the Laplacian, gave another proof valid for $d \geq 2$ that polytopes have the Pompeiu property.
\end{abstract}

\keywords{Fourier-Laplace transform, Pompeiu problem, Null set}

\thanks{F.C.M was supported by grant \#2017/25237-4, from the S\~ao Paulo Research Foundation (FAPESP). This work was partially supported by Conselho Nacional de Desenvolvimento Cient\'\i fico e Tecnol\'{o}gico --- CNPq (Proc. 423833/2018-9) and by the Coordena\c{c}\~ao de Aperfei\c{c}oamento de Pessoal de N\'\i vel Superior --- Brasil (CAPES) --- Finance Code~001.}

\date{April 5, 2021}

\author{Fabr\'\i cio Caluza Machado}
\address{F.C. Machado, Instituto de Matem\'atica e Estat\'\i stica, Universidade de S\~ao Paulo\\ Rua do Mat\~ao 1010, 05508-090 S\~ao Paulo/SP, Brazil.}
\email{fabcm1@gmail.com}

\author{Sinai Robins}
\address{S. Robins, Instituto de Matem\'atica e Estat\'\i stica, Universidade de S\~ao Paulo\\ Rua do Mat\~ao 1010, 05508-090 S\~ao Paulo/SP, Brazil.}
\email{sinai.robins@gmail.com}  

\maketitle

\section{Introduction}

The Pompeiu problem is a fundamental problem that initially arose by intertwining the basic theory of convex bodies with harmonic analysis.  To describe it precisely, consider the group $M(d)$ of all rigid motions of $\R^d$,  including translations, and fix any convex body $\PP \subset \R^d$ with $\dim \PP = d$. In 1929, Pompeiu~\cite{pompeiu29a, pompeiu29b} asked the following question. Suppose that all of the following integrals vanish:
\begin{equation}\label{Pompeiu question}
    \int_{\sigma(\PP)} f(x) \diff x = 0,
\end{equation}
taken over all rigid motions $\sigma \in M(d)$. Does it necessarily follow that $f= 0$?  

If the answer is affirmative, then the convex body $\PP \subset \R^d$ is said to have the Pompeiu property. It is a conjecture that in every dimension, balls are the {\em only} convex bodies that do not have the Pompeiu property. As is immediately apparent, the Pompeiu property is equivalent to the claim that the integral of $f$ over $\PP$, as well as the integrals of $f$ over all the rigid motions of $\PP$, {\it uniquely determine} the function $f$. 
 
It is rather surprising that after almost $100$ years,  the Pompeiu problem remains unsolved for general convex bodies in $\R^d$.  There are, however, infinite families of convex bodies which are known to have the Pompeiu property, and we recall some of these results. 

More attention has been devoted to dimension $d=2$, and a breakthrough occurred with the results of Brown, Schreiber, and Taylor~\cite{brown73}, who showed that the Pompeiu problem is very closely related to mean periodic functions, developed by L. Schwartz \cite{Schwartz}. In  \cite[Theorem 5.11]{brown73} the authors prove that any Lipschitz curve in the plane with a `corner' has the Pompeiu property, and consequently all polygons have the Pompeiu property.   Williams~\cite{williams76} mentions that the proof of Theorem~5.11 in~\cite{brown73} generalizes directly to $d$-dimensions, though such a proof is not explicitly given there. Moreover, Williams~\cite{williams76} also proves that if a convex body does not have the Pompeiu property, then its boundary must be real-analytic, and as a consequence large infinite families of convex bodies have the Pompeiu property, including polytopes. 

Despite these advances, even in dimension $2$ the Pompeiu problem remains open for general convex bodies. On the other hand, there has been a lot of interesting work that relates the Pompeiu problem to other branches of Mathematics, such as the recent work of Kiss, Malikiosis, Somlai, and Vizer~\cite{kiss}, where a discretized version of the Pompeiu problem is shown to be closely tied to the (unsolved) Fuglede conjecture over finite abelian groups.

It turns out that the Pompeiu problem is equivalent to a few other long-standing problems. One of these equivalences is the celebrated conjecture of Schiffer in pde's, relating the Pompeiu problem directly to eigenvalues of the Laplacian (see e.g., Section 3 of Berenstein~\cite{berenstein80b}).

When we consider the Fourier-Laplace transform of the body $\PP$, a very useful necessary and sufficient condition arises. To describe it precisely, suppose we are given the indicator function $1_{\PP}$ of a polytope $\PP$. We define the Fourier-Laplace transform of $\PP$ by 
\[
\hat 1_{\mathcal{P}}(z) := \int_{\mathcal{P}} e^{-2\pi i \langle x, z \rangle} \diff x, 
\]
for all $z \in \C^d$, with the inner product $\langle x, z \rangle:= x_1 z_1 + \cdots x_d z_d$ (we note that this is not the Hermitian inner product). The null set of the Fourier-Laplace transform of a polytope $\PP$ is defined by
\begin{equation*}
N(\PP) := \{\xi \in \C^d \mid \hat 1_{\PP}(\xi) = 0\},
\end{equation*}
which we also refer to simply as the null set of $\PP$. We define the complex algebraic variety
\begin{equation*}
S_{\C}(\alpha):= \{z\in \C^d \mid  z_1^2 + \cdots + z_d^2 = \alpha^2\},
\end{equation*}
for each fixed $\alpha \in \C$.  

\begin{theorem}[Brown, Shreiber, and Taylor \cite{brown73}]\label{thm:BST}
A convex body $\PP\subset \R^d$ has the Pompeiu property if and only if the Fourier-Laplace transform of $\PP$, namely $\hat 1_\PP(z)$, does not vanish identically on any of the complex varieties $ S_{\C}(\alpha)$, for any $\alpha \in \C \setminus \{0\}$.
\end{theorem}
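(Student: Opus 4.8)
The plan is to prove the two implications separately, each by contraposition, after a convolution reformulation. Write a rigid motion as $\sigma(x)=\rho x+t$ with $\rho\in SO(d)$ and $t\in\R^{d}$, and let $T_{\rho}$ be the compactly supported distribution $x\mapsto 1_{\rho\PP}(-x)$. A change of variables shows that the integrals \eqref{Pompeiu question} vanish for every $\sigma$ precisely when $f*T_{\rho}\equiv 0$ for every $\rho\in SO(d)$; the Fourier--Laplace symbol of $T_{\rho}$ is $\widehat{T_{\rho}}(\zeta)=\hat 1_{\PP}(-\rho^{T}\zeta)$; and for a pure exponential $e_{\zeta}(x):=e^{2\pi i\langle x,\zeta\rangle}$ one has $e_{\zeta}*T_{\rho}=\hat 1_{\PP}(-\rho^{T}\zeta)\,e_{\zeta}$. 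Granting this, one implication is immediate: if $\hat 1_{\PP}$ vanishes identically on $S_{\C}(\alpha)$ for some $\alpha\neq 0$, fix any $\zeta\in S_{\C}(\alpha)$; an element of $SO(d)$ preserves $\langle\zeta,\zeta\rangle=\sum_{j}\zeta_{j}^{2}=\alpha^{2}$, so each $-\rho^{T}\zeta$ again lies on $S_{\C}(\alpha)$, hence $\hat 1_{\PP}(-\rho^{T}\zeta)=0$ for all $\rho$, and $e_{\zeta}$ is a nowhere-vanishing (in particular nonzero) function all of whose integrals \eqref{Pompeiu question} vanish; so $\PP$ does not have the Pompeiu property.

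For the converse, again by contraposition, suppose $\PP$ lacks the Pompeiu property: there is $f\in L^{1}_{\mathrm{loc}}(\R^{d})$, $f\not\equiv 0$, with $f*T_{\rho}\equiv 0$ for all $\rho\in SO(d)$. Convolving with a compactly supported smooth bump $\psi$ preserves this whole system (by Fubini, $\int_{\sigma(\PP)}(f*\psi)(x)\diff x$ is an average of the integrals of $f$ over the translates $\sigma(\PP)-y$, each again a rigid image of $\PP$), so we may take $f\in C^{\infty}(\R^{d})$ with $\psi$ chosen so that $f\not\equiv 0$. Then
\[
V:=\{\,g\in C^{\infty}(\R^{d})\ \mid\ g*T_{\rho}\equiv 0\ \text{ for all }\rho\in SO(d)\,\}
\]
is a nonzero, closed, translation- and rotation-invariant subspace of $C^{\infty}(\R^{d})$. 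The crucial ingredient --- and the step I expect to be the main obstacle --- is a \emph{spectral analysis} statement: being nonzero, $V$ must contain a pure exponential $e_{\zeta}$; equivalently (since $e_{\zeta}*T_{\rho}=\hat 1_{\PP}(-\rho^{T}\zeta)e_{\zeta}$), the symbols $\widehat{T_{\rho}}$, $\rho\in SO(d)$, have a common zero in $\C^{d}$. In dimension one this is the classical theory of mean-periodic functions; for $d\ge 2$ it is the harmonic-analytic machinery of Brown, Schreiber, and Taylor~\cite{brown73} (which Williams~\cite{williams76} observed works in all dimensions), and it is genuinely delicate since spectral \emph{synthesis} can fail in $\R^{d}$ --- what rescues the argument is the overdeterminedness of the rotated family $\{T_{\rho}\}$. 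Granting it, we obtain $\zeta$ with $\hat 1_{\PP}(-\rho^{T}\zeta)=0$ for all $\rho\in SO(d)$, and since $\hat 1_{\PP}(0)=\vol(\PP)\neq 0$ we must have $\zeta\neq 0$.

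It then remains to promote the vanishing of $\hat 1_{\PP}$ on the \emph{real} orbit $\{-\rho^{T}\zeta:\rho\in SO(d)\}$ to vanishing on a complex sphere, and here I would use complexification. The function $\hat 1_{\PP}$ is entire --- it is the integral over the compact set $\PP$ of an entire integrand --- so $\rho\mapsto\hat 1_{\PP}(-\rho^{T}\zeta)$ is holomorphic on the connected complex Lie group $SO(d,\C)$ and vanishes, by the previous step, on the compact real form $SO(d)$, a maximal totally real submanifold whose complexification is $SO(d,\C)$; by the identity theorem it vanishes on all of $SO(d,\C)$. Hence $\hat 1_{\PP}$ vanishes on the entire orbit $\{-\rho^{T}\zeta:\rho\in SO(d,\C)\}$. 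Set $\alpha\in\C$ with $\alpha^{2}=\langle\zeta,\zeta\rangle$. If $\alpha\neq 0$, then $SO(d,\C)$ acts transitively on $S_{\C}(\alpha)$ (which is smooth, as $\alpha\neq 0$) and $-\zeta\in S_{\C}(\alpha)$, so the orbit is all of $S_{\C}(\alpha)$ --- the desired conclusion. If $\alpha=0$, then $-\zeta$ is a nonzero isotropic vector whose $SO(d,\C)$-orbit has $0$ in its closure (being stable under a suitable one-parameter group of complex scalings inside $SO(d,\C)$), forcing $\hat 1_{\PP}(0)=0$ by continuity and contradicting $\hat 1_{\PP}(0)=\vol(\PP)\neq 0$; so this case does not arise. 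This closes the contrapositive and hence the equivalence.
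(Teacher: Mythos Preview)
Your proposal is correct and follows essentially the same outline as the paper's discussion of Theorem~\ref{thm:BST} (which the paper does not prove in full, but sketches immediately after the statement, deferring the hard spectral step to~\cite{brown73}). Both directions match: the easy direction via an exponential witness $e_\zeta$ with $\zeta\in S_\C(\alpha)$, and the hard direction via (i) the Brown--Schreiber--Taylor machinery producing a common zero $\zeta$ of the rotated symbols, followed by (ii) analytic continuation from the real $SO(d)$-orbit of $\zeta$ to the full complex variety $S_\C(\alpha)$.

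The one place where your argument differs in presentation is step~(ii): the paper continues $\hat 1_\PP$ directly on the target, arguing that the real orbit $SO(d,\R)\zeta$ is a real submanifold of $S_\C(\alpha)$ on which the holomorphic function $\hat 1_\PP$ vanishes, and then invoking an identity principle (Lemma~3.1.2 of~\cite{lebl19}); you instead continue on the source, extending the holomorphic map $\rho\mapsto \hat 1_\PP(-\rho^{T}\zeta)$ from $SO(d,\R)$ to $SO(d,\C)$ and then using transitivity of $SO(d,\C)$ on $S_\C(\alpha)$ for $\alpha\neq 0$. Your route is arguably cleaner, since $SO(d,\R)$ is manifestly a maximal totally real submanifold of the connected group $SO(d,\C)$, whereas on the target side one must check that the real orbit $SO(d,\R)\zeta$ is a uniqueness set inside $S_\C(\alpha)$ (its real dimension can be strictly less than $d-1$ when $\zeta\notin\R^d$). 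Your separate treatment of the isotropic case $\alpha=0$ via the scaling one-parameter subgroup in $SO(d,\C)$ is also a nice touch; the paper handles this simply by observing $0\in S_\C(0)\setminus N(\PP)$.
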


In other words, Pompeiu's problem is equivalent to the claim that the null set $N(\PP)$ does not contain any of the complex algebraic varieties $ S_{\C}(\alpha)$. The authors of \cite{brown73} prove this condition for dimension $d=2$, and they mention that the same proof works in general dimension. Bagchi and Sitaram \cite[p. 74--75]{bagchi90} reprove Theorem \ref{thm:BST}, for $d=2$, and they also mention that the same proof works for general dimension. Berenstein comments (Section 3 of~\cite{berenstein80b}), that the condition `$\alpha \in \C \setminus \{0\}$' from Theorem~\ref{thm:BST} can be replaced by `$\alpha > 0$' (possibly under the condition that $\PP$ is simply-connected).  We do not use this restriction in our proof, however, since our arguments work for any complex `$\alpha \in \C \setminus \{0\}$'.

One direction of Theorem~~\ref{thm:BST} is easy to see. If $S_\C(\alpha) \subset N(\PP)$ for some $\alpha \in \C \setminus \{0\}$, then taking $\xi \in S_\C(\alpha)$ and letting $f(x) := e^{-2\pi i \innerpsmall{x}{\xi}}$, we have $\int_{\sigma(\PP)}f(x) \diff x = 0$ for all $\sigma \in M(d)$. For the other direction, first we notice that it is apparent that $S_\C(0) \not\subset N(\PP)$, because the zero element $0 \in S_\C(0)$, yet $0 \notin N(\PP)$ since $\hat 1_\PP(0) = \vol(\PP) \not=0$. Berenstein~\cite[p. 133]{berenstein80b} observes that in~\cite{brown73}, Brown, Schreiber, and Taylor show that if $\PP$ doesn't have the Pompeiu property, then $\hat 1_{\sigma(\PP)}$ has a common zero $z$ for all $\sigma \in M(d)$.  Next, using the fact that for a rotation $\sigma \in \mathrm{SO}(d,\R) \subset M(d)$ we get $\hat{1}_{\sigma(\PP)}(z) = \hat{1}_{\PP}(\sigma^{-1}z)$, we obtain that the orbit $\mathrm{SO}(d, \R)z \subset N(\PP)$. Letting $\alpha := z_1^2 + \dots + z_d^2$, we have that $\mathrm{SO}(d, \R)z$ is a real submanifold of $S_\C(\alpha)$, on which the analytic function $\hat{1}_{\PP}$ vanishes, hence it  also vanishes on the rest of $S_\C(\alpha)$ (see e.g., Lemma 3.1.2 in \cite{lebl19}).

\bigskip
Here we prove, in an explicit manner, that the Pompeiu property is true for all polytopes $\mathcal P \subset \R^d$, with $d \geq 2$.  In other words, we give a new proof that all polytopes have the Pompeiu property, which is simple and is essentially self-contained. In addition, the present methods allow us to prove slightly more:  `most' circles in $\R^d$ are not contained in the null set $N(\PP)$ (stated precisely in Theorem \ref{thm:whenPisapolytope}).

By way of comparison, the machinery developed in \cite{williams76}, from which it also follows that polytopes have the Pompeiu property,  is highly non-trivial;   the present proof uses an explicitly known form of the Fourier-Laplace transform of a polytope, and is much simpler. Our main result is as follows.

\begin{theorem}\label{thm:whenPisapolytope}
Let $\PP \subset \R^d$ be a $d$-dimensional polytope, $H \subset \R^d$ be a $2$-dimensional real subspace that is not orthogonal to any edge from $\PP$, and fix an orthonormal basis $\{e, f \} \subset \R^d$ for $H$. 

Then the null set $N(\PP)$ does not contain the `circle'
\begin{equation*}
C_\alpha := \big\{ \alpha (\cos t) e + \alpha (\sin t) f \in \C^d \mid t \in [-\pi, \pi] \big\},
\end{equation*}
for any $\alpha \in \C \setminus \{0\}$. 
\end{theorem}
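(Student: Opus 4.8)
The plan is to argue by contradiction from the Brion--Barvinok description of $\hat{1}_{\PP}$, reading off the contribution of a single ``dominant'' vertex of $\PP$. Write $\pi_H$ for the orthogonal projection of $\R^d$ onto $H$ and $z(t) := \alpha(\cos t)\,e + \alpha(\sin t)\,f$ for $t\in\C$; since $\{e,f\}$ is orthonormal, $\langle z(t),e\rangle = \alpha\cos t$ and $\langle z(t),f\rangle = \alpha\sin t$, so $\{z(t):t\in\C\}$ is exactly the irreducible complex conic $\Gamma := \{z\in H_{\C} : \langle z,e\rangle^2+\langle z,f\rangle^2=\alpha^2\} = S_{\C}(\alpha)\cap H_{\C}$. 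Suppose $C_\alpha \subset N(\PP)$. Then $\hat{1}_{\PP}(z(t))=0$ for all real $t$, and since $t\mapsto\hat{1}_{\PP}(z(t))$ is entire, $\hat{1}_{\PP}$ vanishes on all of $\Gamma$. Next I would apply the Brion--Barvinok theorem to write $\hat{1}_{\PP}(z)=\sum_{v}e^{-2\pi i\langle v,z\rangle}\hat{1}_{K_v}(z)$, the sum over vertices $v$ of $\PP$, with $K_v$ the tangent cone of feasible directions at $v$ and each $\hat{1}_{K_v}$ a rational function that is \emph{homogeneous of degree $-d$} (because $K_v$ is a cone) and is holomorphic off the hyperplanes $\{z:\langle u,z\rangle=0\}$, $u$ ranging over edge directions of $\PP$ (triangulate each $K_v$ into simplicial cones using only edge rays of $\PP$, so that no other hyperplanes occur). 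This is where the hypothesis enters: since $H$ is orthogonal to no edge, $\pi_H(u)\neq0$ for every edge direction $u$, so $\langle u,z(t)\rangle=\alpha|\pi_H(u)|\cos(t-\phi_u)$ is not the zero function and vanishes only on a discrete set of \emph{real} $t$; hence on $0<|\zeta|<1$ with $\zeta:=e^{it}$ the curve $z(t)$ avoids every pole hyperplane, the identity holds term by term there, and with $F(\zeta):=\hat{1}_{\PP}(z(t))$ we have $F\equiv0$ together with $F(\zeta)=\sum_v e^{-2\pi i\langle v,z(t)\rangle}\hat{1}_{K_v}(z(t))$, each $\hat{1}_{K_v}(z(t))$ a rational function of $\zeta$.

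I then extract one vertex. Grouping vertices by their shadow $q=\pi_H(v)$ gives $F(\zeta)=\sum_{q}e^{-2\pi i\langle q,z(t)\rangle}S_q(\zeta)$ with $S_q:=\sum_{v:\pi_H(v)=q}\hat{1}_{K_v}(z(t))$, the sum over $q$ in the finite set $\pi_H(\{\text{vertices}\})$, whose convex hull is the two-dimensional shadow polygon $Q:=\pi_H(\PP)$. Writing $\zeta=|\zeta|e^{i\psi}$, the $q$-group's exponential weight has modulus $\exp\!\big(\pi|\zeta|^{-1}\mathrm{Im}(\alpha e^{-i\psi}\langle q,e+if\rangle)+O(|\zeta|)\big)$, governed by a real-linear functional of $q$. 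Choose $\psi$ generic and such that this functional is maximized over $Q$ at a single vertex $q^{*}$ with positive maximum (e.g.\ point roughly toward the vertex of $Q$ farthest from the origin, then perturb). As $\zeta\to0$ along $\arg\zeta=\psi$, the $q^{*}$-group dominates all others exponentially, so that unless $S_{q^{*}}\equiv0$ one gets $|F(\zeta)|\sim|S_{q^{*}}(\zeta)|e^{\pi|\zeta|^{-1}(\cdots)}\to\infty$ — using that a nonzero rational function of $\zeta$ is bounded below near $0$ by a power of $|\zeta|$, which the exponential swamps — contradicting $F\equiv0$. Hence $S_{q^{*}}\equiv0$ as a function of $t$, i.e.\ the rational function $S_{q^{*}}$ on $H_{\C}$ vanishes on $\Gamma$. (I expect this ``single dominant vertex'' step is where the authors use Bessel functions: by the Jacobi--Anger expansion the $n$th Fourier coefficient of $F$ on $C_\alpha$ is $(-i)^{n}\int_{\PP}e^{-in\phi_x}J_{n}(2\pi\alpha\,r_x)\diff x$ with $r_x=|\pi_H(x)|$, and the asymptotics $J_n(w)\sim(2\pi n)^{-1/2}(ew/2n)^{n}$ single out the same extremal vertices; the complex-$\zeta$ limit above is a compact repackaging of this.)

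It remains to see $S_{q^{*}}\not\equiv0$, which yields the contradiction. Identifying $H_{\C}$ with $\C^{2}$ via the coordinates $\langle z,e\rangle,\langle z,f\rangle$, $S_{q^{*}}$ is rational and, as a sum of the degree-$(-d)$ homogeneous functions $\hat{1}_{K_v}|_{H_\C}$, is \emph{homogeneous}. Moreover $S_{q^{*}}$ is the tangent-cone piece at the vertex $q^{*}$ of $Q$ in the Brion expansion of the shadow density $\rho(y):=\vol_{d-2}\!\big(\PP\cap\pi_H^{-1}(y)\big)$ (equivalently, $\sum_{v:\pi_H(v)=q^{*}}\hat{1}_{K_v}|_{H_\C}$ is the Fourier--Laplace transform of $y\mapsto\sum_{v:\pi_H(v)=q^{*}}\vol_{d-2}(K_v\cap\pi_H^{-1}(y))$), so it is the transform of the germ of $\rho$ at $q^{*}$; this germ is nonnegative, homogeneous, and strictly positive whenever $q^{*}+\delta$ lies in $\mathrm{int}\,Q$, hence not identically zero, so $S_{q^{*}}\not\equiv0$. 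Finally, a nonzero homogeneous rational function on $\C^{2}$ cannot vanish on $\Gamma$: writing it as $A/B$ with $A,B$ coprime homogeneous polynomials, $B$ cannot vanish on $\Gamma$ (a line through the origin meets the conic in at most two points), so $A$ vanishes on the irreducible curve $\Gamma$, forcing $\big(\langle z,e\rangle^2+\langle z,f\rangle^2-\alpha^2\big)\mid A$; but that polynomial is irreducible and \emph{not} homogeneous, so $A\equiv0$ — a contradiction, completing the proof.

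The step I expect to be the main obstacle is making the ``single dominant vertex'' analysis rigorous for arbitrary, possibly non-simple, polytopes: one must handle the non-simplicial tangent cones so that the Brion--Barvinok expansion introduces no extra pole hyperplanes, and must control the grouping of vertices with equal shadow together with the (Bessel) asymptotics uniformly over $\PP$. The algebraic observation that a homogeneous function cannot vanish on the non-conical conic $\Gamma$ — which is precisely where $\alpha\neq0$ is used — is the other required ingredient, but a routine one.
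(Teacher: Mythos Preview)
Your argument is correct and follows a route that is close in spirit to the paper's but differs in its execution at both key steps.

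The paper, like you, starts from the Brion--Barvinok expansion \eqref{eq:fourier-polytope-brion}, clears denominators to obtain an identity $\sum_v p_v(t)\,e^{-2\pi i\langle v,z(t)\rangle}=0$, and then isolates one vertex. Where you send $\zeta=e^{it}\to 0$ (equivalently $\mathrm{Im}\,t\to+\infty$) and read off the exponentially dominant group directly, the paper instead expands $e^{-2\pi i\langle v,z(t)\rangle}$ via Jacobi--Anger into Bessel functions, extracts the $n$th Fourier coefficient in $t$, and uses the large-$n$ asymptotics $J_n(z)\sim (z/2)^n/n!$. These two limits are essentially the same saddle-point phenomenon, so your ``compact repackaging'' remark is on target. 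The second difference is more substantial: the paper first \emph{translates} $\PP$ so that a chosen vertex $u$ uniquely maximizes $r_v=|\pi_H(v)|$, which makes that vertex dominate for \emph{every} direction and reduces the contradiction to the single scalar equation $c_{u,N}=0$. You avoid the translation entirely by your algebraic observation that a nonzero rational function on $H_{\C}$ homogeneous of degree $-d$ cannot vanish on the affine conic $\Gamma=\{z_1^2+z_2^2=\alpha^2\}$ when $\alpha\neq 0$; this is a clean replacement and is exactly where $\alpha\neq 0$ enters for you.

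One simplification worth noting: your shadow-density argument for $S_{q^*}\not\equiv 0$ is correct but heavier than necessary. The hypothesis that $H$ is orthogonal to no edge forces the fiber $\pi_H^{-1}(q^*)\cap\PP$ to be zero-dimensional (any edge of that face would be orthogonal to $H$), so exactly one vertex $v^*$ sits over each vertex $q^*$ of $Q$, and $S_{q^*}=\hat 1_{K_{v^*}}|_{H_\C}$ is a single Brion term. Then, as the paper does for its $p_u$, choosing a real $z_0\in H$ with $\langle w,z_0\rangle>0$ for every edge direction $w$ at $v^*$ (possible since $\pi_H(K_{v^*})$ is the pointed tangent cone of $Q$ at $q^*$) makes every summand in the simplicial decomposition strictly positive, so $S_{q^*}(z_0)\neq 0$.

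In summary: both proofs hinge on Brion--Barvinok plus an asymptotic isolation of one vertex; the paper's version is more explicit (Bessel series, translation), while yours is more structural (complex limit, homogeneity versus the affine conic). Your identification of the ``non-simple tangent cones'' issue as the main technical point is accurate, and both proofs handle it the same way---by triangulating with no new edge directions.
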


\medskip
\noindent
As an immediate consequence of Theorem \ref{thm:whenPisapolytope} and Theorem \ref{thm:BST}, we recover William's result \cite{williams76} for polytopes, as follows.
\begin{cor}
The null set $N(\PP)$ does not contain the complex variety $S_{\C}(\alpha)$, for any $\alpha \in \C \setminus \{0\}$. Consequently, all polytopes in $\R^d$ have the Pompeiu property, for each $d\geq 2$. 
\end{cor}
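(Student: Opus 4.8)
The plan is to deduce the corollary directly from Theorem~\ref{thm:whenPisapolytope} and Theorem~\ref{thm:BST}; no new analysis is required, only a genericity observation and one short computation, so the whole content of the matter is already contained in Theorem~\ref{thm:whenPisapolytope}.

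First I would fix $\alpha \in \C \setminus \{0\}$ and produce a $2$-dimensional real subspace $H \subset \R^d$ that is not orthogonal to any edge of $\PP$. Since $\PP$ has only finitely many edges, it has only finitely many edge directions $u_1, \dots, u_N \in \R^d \setminus \{0\}$, and for each $j$ the set of $2$-planes $H$ with $u_j \perp H$ is a proper, hence nowhere dense, subset of the Grassmannian $G(2,d)$; therefore a generic $H \in G(2,d)$ avoids all of them, and in particular an admissible $H$ exists. (When $d = 2$ this is immediate: the only candidate is $H = \R^d$, and a nonzero vector is never orthogonal to all of $\R^d$.) Fix such an $H$ together with an orthonormal basis $\{e, f\}$, and form the circle $C_\alpha := \{\alpha(\cos t)e + \alpha(\sin t)f \mid t \in [-\pi, \pi]\}$. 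By Theorem~\ref{thm:whenPisapolytope}, $C_\alpha \not\subset N(\PP)$.

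Next I would verify that $C_\alpha \subset S_\C(\alpha)$, being careful that $\langle\,\cdot\,,\cdot\,\rangle$ is the bilinear (not Hermitian) form. For $z = \alpha(\cos t)e + \alpha(\sin t)f$, using that $e, f$ are real with $\langle e, e\rangle = \langle f, f\rangle = 1$ and $\langle e, f\rangle = 0$,
\[
\langle z, z \rangle = \alpha^2\big(\cos^2 t\,\langle e, e\rangle + 2\cos t \sin t\,\langle e, f\rangle + \sin^2 t\,\langle f, f\rangle\big) = \alpha^2,
\]
so $z_1^2 + \cdots + z_d^2 = \alpha^2$, i.e.\ $z \in S_\C(\alpha)$. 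Combining this with the previous paragraph: if we had $S_\C(\alpha) \subset N(\PP)$, then $C_\alpha \subset S_\C(\alpha) \subset N(\PP)$, a contradiction; hence $S_\C(\alpha) \not\subset N(\PP)$, equivalently $\hat 1_\PP$ does not vanish identically on $S_\C(\alpha)$.

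Since $\alpha \in \C \setminus \{0\}$ was arbitrary, $\hat 1_\PP$ does not vanish identically on any $S_\C(\alpha)$ with $\alpha \neq 0$, and Theorem~\ref{thm:BST} then yields that $\PP$ has the Pompeiu property; as nothing was special about $\PP$, this holds for every $d$-dimensional polytope in $\R^d$, for each $d \geq 2$. There is essentially no obstacle in this deduction: the only points needing (brief) care are the existence of an admissible plane $H$, which is a transversality statement, and the bookkeeping with the bilinear form in checking $C_\alpha \subset S_\C(\alpha)$ — the substantive work having been done in Theorem~\ref{thm:whenPisapolytope}.
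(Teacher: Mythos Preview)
Your proposal is correct and is exactly the argument the paper has in mind: the corollary is stated there as an immediate consequence of Theorem~\ref{thm:whenPisapolytope} together with Theorem~\ref{thm:BST}, with no further proof given. Your added details---the genericity argument for the existence of an admissible $2$-plane $H$ and the verification that $C_\alpha \subset S_\C(\alpha)$ via the bilinear form---just make explicit what the paper leaves to the reader.
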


\bigskip
\begin{rem}
We note that as a special case of Theorem~\ref{thm:whenPisapolytope}, the null set $N(\PP)$ does not contain any real circle sitting in $\R^d$, except perhaps for those circles that lie in some two-dimensional hyperplane orthogonal to some edge of $\PP$. The reason we cannot yet exclude this (zero measure) set of circles is because of the singularities that come from the denominators in Brion's formula for the Fourier transform of a polytope. However, because these singularities are removable, we conjecture that no circle in $\R^d$ is contained in the null set $N(\PP)$. 
\end{rem}

\bigskip
\section{Preliminaries}

\subsection{Fourier-Laplace transform of a polytope via Brion's theorem}\label{sec:brion}

In this section we recall some standard definitions from the literature, especially of tangent cones of polytopes, and their Fourier-Laplace transforms. 

Given a $d$-dimensional polytope $\PP \subset \R^d$ with vertex set $V(\PP)$, for each  $v \in V(\PP)$ we denote by $K_v$ its tangent cone, defined by
\[K_v := \{v + \lambda (x-v) \mid  x \in P,\, \lambda \geq 0\}.\]
This is a pointed cone with apex $v$ and it has a set of generators $w^v_{1}, \dots, w^v_{m}$, so that it can also be written as $K_v = \{v + \lambda_1 w^v_{1} + \dots + \lambda_m w^v_{m} \mid \lambda_j \geq 0\}$.  Each $w^v_{k}$ is a $1$-dimensional edge of $P$, emanating from $v$.  When $m = d$, we say that the cone is simplicial and we define 
\[
\det K_v := |\det(w^v_1, \dots, w^v_d)|.
\]
Every pointed cone can be triangulated into simplicial cones with no new generators, which means a collection $K_{v,1}, \dots, K_{v,M_v}$ of simplicial cones with disjoint interiors such that $K_v = \bigcup_{j=1}^{M_v} K_{v,j}$ (see Beck and Robins~\cite[Section 3.2]{beck15}). 

The Fourier-Laplace transform of a polytope $\PP$ is the entire function $\hat 1_{\PP} \colon \C^d \to \C$ defined by
\[
\hat 1_{\PP}(z) := \int_{\PP} e^{-2\pi i \innerpsmall{\xi}{z}} \diff \xi,
\] 
where $\innerp{\xi}{z} := \xi_1 z_1 + \dots + \xi_d z_d$. The same integral can also be considered over a cone $K$ instead of a polytope, but then that integral over the unbounded domain $K$ would converge only on a restricted complex domain (see Barvinok~\cite[Chapter 8]{barvinok08} for a presentation of these integrals as exponential valuations on polyhedra). The Fourier-Laplace transform of the cones $K_v$ and the polytope $P$ are related by the following striking theorem, originally due to Brion~\cite{brion88} and extended to arbitrary polytopes by Barvinok~\cite{barvinok92}, which for some polytopes produces an effective method to compute $\hat 1_{\PP}(z)$.
\begin{theorem}[Brion-Barvinok]\label{thm:Brion}
Let $\PP \subset \R^d$ be a $d$-dimensional polytope. For each $v \in V(\PP)$, there exist functions $s_v(z) := e^{-2\pi i \innerpsmall{z}{v}} q_v(z)$, where $q_v(z)$ is a rational function homogeneous of degree $-d$, such that
\begin{equation}\label{eq:Brion}
\hat 1_{\PP}(z) = \int_{\PP} e^{-2\pi i \innerpsmall{\xi}{z}} \diff \xi = \sum_{v \in V(\PP)} s_v(z),
\end{equation}
holds for all $z \in \C^d$ that are regular for all $s_v$. If $z$ is such that $\mathrm{Im}( \innerpsmall{\xi-v}{z}) < 0$ for all $\xi \in K_v \setminus \{v\}$, then 
\begin{equation}\label{eq:Brion-cone}
s_v(z) = \int_{K_v} e^{-2\pi i \innerpsmall{\xi}{z}} \diff \xi.
\end{equation}
\end{theorem}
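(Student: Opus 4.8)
\textbf{Proof strategy for Theorem~\ref{thm:whenPisapolytope}.}

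The plan is to restrict Brion's formula \eqref{eq:Brion} to the circle $C_\alpha$ and extract the asymptotics of $\hat 1_{\PP}$ along that circle, showing the asymptotics are incompatible with vanishing identically. Parametrize points of $C_\alpha$ by $z(t) = \alpha(\cos t)e + \alpha(\sin t)f$ and set $g(t) := \hat 1_{\PP}(z(t))$. From \eqref{eq:Brion} we get $g(t) = \sum_{v} e^{-2\pi i \alpha(\cos t\,\langle e,v\rangle + \sin t\,\langle f,v\rangle)}\, q_v(z(t))$, away from the finitely many $t$ where some $s_v$ is singular. Because $q_v$ is homogeneous of degree $-d$, we may write $q_v(z(t)) = \alpha^{-d} r_v(t)$ for a function $r_v$ depending only on the geometry of the cone $K_v$ in the directions $e,f$; crucially the hypothesis that $H$ is not orthogonal to any edge of $\PP$ guarantees that for each vertex $v$ and each generator $w^v_k$, the inner products $\langle w^v_k, e\rangle$ and $\langle w^v_k, f\rangle$ do not both vanish, so along $C_\alpha$ the denominators of $q_v$ vanish only at isolated values of $t$ and $r_v$ is (real-)analytic on the complement of a finite set. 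If $g$ vanishes identically on $C_\alpha$ it vanishes on all of $[-\pi,\pi]$ by continuity/analyticity, including near those isolated singular $t$'s.

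The key idea is to detect a single vertex through its exponential. Fix the orthogonal projection $\pi_H \colon \R^d \to H$, and consider the image polytope $\pi_H(\PP) \subset H \cong \R^2$; its vertices are images of certain vertices of $\PP$. Pick a generic direction $u = (\cos t_0)e + (\sin t_0)f$ in $H$ and let $v_\ast$ be the vertex of $\PP$ that uniquely maximizes $\xi \mapsto \langle \xi, u\rangle$ (generic $u$ makes the maximizer unique, and the non-orthogonality hypothesis ensures it is a genuine vertex contribution, not a cancelling pair). Then along the ray of $t$ near $t_0$, after multiplying $g(t)$ by $e^{2\pi i \alpha \langle v_\ast, \cos t\, e + \sin t\, f\rangle}$, the term from $v_\ast$ stays bounded (indeed tends to the nonzero constant $\alpha^{-d} r_{v_\ast}(t_0)$ as one approaches along a suitable path) while every other term carries an extra exponential factor $e^{-2\pi i \alpha \langle v - v_\ast,\, \cos t\, e + \sin t\, f\rangle}$. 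The plan is to then move $\alpha$ (or rather the parameter along which we take the limit) into the complex direction so that $\mathrm{Im}\big(\langle v - v_\ast, \cdot\rangle\big)$ has a definite sign for all $v\neq v_\ast$, forcing those terms to $0$ and leaving $g \sim \alpha^{-d} q_{v_\ast}(u)\, e^{-2\pi i \alpha \langle v_\ast, u\rangle} \neq 0$, a contradiction. Concretely this is cleanest if one first reduces to $\alpha > 0$ by the rotation-invariance and analyticity argument recalled after Theorem~\ref{thm:BST} (replacing $C_\alpha$ by $S_\C(\alpha)$ and $\alpha^2$ by a positive real), and then exploits that $\hat 1_{\PP}$ is entire to evaluate it at $z(t)$ with $t$ given a small negative imaginary part $t = t_0 - is$, $s \to +\infty$: this is exactly the regime $\mathrm{Im}\langle \xi - v_\ast, z\rangle < 0$ on $K_{v_\ast}\setminus\{v_\ast\}$ in which \eqref{eq:Brion-cone} holds and the cone integral is dominated by a neighborhood of the apex $v_\ast$.

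Carrying this out requires one more ingredient: showing $q_{v_\ast}(u) \neq 0$ for generic $u \in H$. Using a triangulation of $K_{v_\ast}$ into simplicial cones $K_{v_\ast, j}$ with generators $w_{1},\dots,w_{d}$, the cone integral \eqref{eq:Brion-cone} evaluates to the classical product $\frac{\det K_{v_\ast,j}}{\prod_{k=1}^{d}\big(2\pi i \langle w_{k}, z\rangle\big)}$ up to the exponential, so $q_{v_\ast}(z)$ is a sum of such terms; one must check this sum is not identically zero on the complexified $2$-plane $H$. Here the non-orthogonality hypothesis is used again: each linear form $z \mapsto \langle w_k, z\rangle$ restricts to a nonzero linear form on $H_\C$, so each summand is a nonzero rational function on $H_\C$, and a short argument (e.g. comparing poles along the lines $\langle w_k, z\rangle = 0$, which are distinct for distinct edge directions, or simply invoking that $q_{v_\ast}$ is the Fourier--Laplace transform of the full-dimensional cone $K_{v_\ast}$ and so cannot vanish on an open set of its domain of convergence) shows $q_{v_\ast}|_{H_\C} \not\equiv 0$. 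The main obstacle I anticipate is bookkeeping the singular set: different vertices can have coincident or nearby singular $t$-values, and one must make sure the ``dominant vertex'' $v_\ast$ selected for a generic direction $u$ has $q_{v_\ast}$ regular at $u$ and that the limit along $t = t_0 - is$ genuinely isolates it; handling this cleanly will likely use a suitable rotation of $\{e,f\}$ within $H$ (which changes neither $H$ nor the hypothesis) to place $t_0$ away from all singularities simultaneously, together with the standard fact that the polar bodies / normal fans involved only have finitely many relevant directions.
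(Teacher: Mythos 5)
Your proposal does not address the statement it was asked to prove. The statement in question is Theorem~\ref{thm:Brion} (the Brion--Barvinok theorem): the assertion that the Fourier--Laplace transform of a polytope decomposes as $\hat 1_{\PP}(z) = \sum_{v \in V(\PP)} s_v(z)$, where each $s_v(z) = e^{-2\pi i \innerpsmall{z}{v}} q_v(z)$ is the meromorphic continuation of the exponential integral over the vertex tangent cone $K_v$, together with the identification \eqref{eq:Brion-cone} of $s_v$ with that integral on its domain of convergence. What you have written is instead a proof strategy for Theorem~\ref{thm:whenPisapolytope}, the paper's main result about circles not lying in the null set $N(\PP)$ — and your strategy \emph{assumes} the Brion--Barvinok theorem as its starting point (``restrict Brion's formula \eqref{eq:Brion} to the circle $C_\alpha$''). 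Nothing in your text establishes \eqref{eq:Brion} or \eqref{eq:Brion-cone}; the closest you come is recalling the explicit evaluation $\frac{\det K_{v,j}}{\prod_k 2\pi i \innerpsmall{w_k}{z}}$ of a simplicial cone integral, which is only one ingredient.

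A proof of the actual statement would need: (i) the computation of $\int_{K} e^{-2\pi i \innerpsmall{\xi}{z}}\diff\xi$ for a simplicial cone $K$ in its half-space of convergence, extended to arbitrary pointed cones by triangulation, which yields \eqref{eq:Brion-cone} and shows $q_v$ is rational and homogeneous of degree $-d$; (ii) the fact that $P \mapsto \int_P e^{-2\pi i \innerpsmall{\xi}{z}}\diff\xi$ extends to a valuation on the algebra of polyhedra that annihilates polyhedra containing lines (Barvinok's exponential valuation); and (iii) the Brianchon--Gram/Brion identity expressing $1_{\PP}$ as $\sum_v 1_{K_v}$ modulo indicators of polyhedra with lines, so that applying the valuation gives \eqref{eq:Brion}. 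The paper itself does not reprove this theorem — it cites Brion and Barvinok — so there is no internal proof to compare against; but as a proof attempt for the stated theorem, your submission is a non-starter because it proves (or rather, sketches a proof of) a different result that is logically downstream of the one requested.
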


Equation~\eqref{eq:Brion-cone} enables us to derive an explicit formula for $s_v(z)$, especially in the case when $K_v$ is  simplicial. Continuing to denote the generators of a simplicial $K_v$ by $w^v_{1}, \dots, w^v_{d}$, we have
\[
\int_{K_v} e^{-2\pi i \innerpsmall{\xi}{z}} \diff \xi = \frac{e^{-2\pi i\langle v, z \rangle}}{(2\pi i)^d} \, \frac{\det K_v}{\innerp{w_{1}^v}{z} \dots \innerp{w_{d}^v}{z}},
\]
for $z$ such that $\mathrm{Im}( \innerp{\xi-v}{z}) < 0$ for all $\xi \in K_v \setminus \{v\}$.

The condition ``$\mathrm{Im}( \innerp{\xi-v}{z}) < 0$ for all $\xi \in K_v \setminus \{v\}$'' is used to guarantee the convergence of the integral \eqref{eq:Brion-cone} to the function above; however, in~\eqref{eq:Brion} the latter formula for $s_v(z)$ can be used for all $z \in \C^d$ for which the denominators do not vanish.

More generally, we may triangulate vertex tangent cones as follows. If $K_{v,1}, \dots, K_{v,M_v}$ are simplicial cones with disjoint interiors such that $K_v = \bigcup_{j=1}^{M_v} K_{v,j}$ and for each $1 \leq j \leq M_v$,  $w_{j,1}^v, \dots, w_{j,d}^v$ are the edges of $K_{v,j}$, then for $z$ such that $\mathrm{Im}( \innerp{\xi-v}{z}) < 0$ for all $\xi \in K_v \setminus \{v\}$,
\[
\int_{K_v} e^{-2\pi i \innerpsmall{\xi}{z}} \diff \xi = \sum_{j = 1}^{M_v} \frac{e^{-2\pi i \innerpsmall{v}{z}}}{(2\pi i)^d} \frac{\det K_{v,j}}{\innerp{w_{j,1}^v}{z} \dots \innerp{w_{j,d}^v}{z}}.
\]
Therefore, we have
\begin{equation}\label{eq:fourier-polytope-brion}
\hat{1}_{P}(z) = \sum_{v \in V(P)} \sum_{j = 1}^{M_v} \frac{e^{-2\pi i \innerpsmall{v}{z}}}{(2\pi i)^d} \frac{\det K_{v,j}}{\innerp{w_{j,1}^v}{z} \dots \innerp{w_{j,d}^v}{z}}.
\end{equation}
Since $\PP$ is compact and $\hat 1_{\PP}(z)$ is continuous for all $z \in \C^d$, the formula above can be used to evaluate $\hat 1_{\PP}(z)$ for all $z \in \C^d$;  however, care has to be taken when choosing $z$ that makes any of the denominators of \eqref{eq:fourier-polytope-brion} vanish, but an appropriate limiting procedure can take care of these cases as well. 

\bigskip
\subsection{Some properties of the Bessel functions}

The Bessel functions are a very well known family of functions that appear in physical problems with spherical or cylindrical symmetry.  One reason for their ubiquity is their appearance as solutions of the wave equation when put into spherical or cylindrical coordinate systems. 

Here we collect some of their useful properties, all of which can be found e.g. in the Chapter 9 from the book of Temme~\cite{temme96}. We will be interested in the Bessel functions of the first kind, called $J_n(z)$, which are defined for complex values of $z$, and integer order $n$ (although they may also be defined for complex~$n$). They appear in the present work since they have the following integral representation:
\begin{equation*}%\label{eq:Bessel-integral}
J_n(z) = \frac{1}{2\pi} \int_0^{2\pi}e^{iz\sin t}e^{-int}\diff t.
\end{equation*}
This identity implies that they are the coefficients of the Fourier series expansion of $e^{iz\sin t}$:
\begin{equation}\label{eq:Bessel-generating}
e^{iz\sin t} = \sum_{n\in \Z} J_n(z) e^{in t},
\end{equation}
an identity that is also known as the Jacobi-Anger expansion. Another representation for $J_n(z)$ is the hypergeometric series
\begin{equation*}
J_n(z) = \Big(\frac{z}{2}\Big)^n \sum_{k=0}^\infty \frac{(-1)^k}{(n+k)!k!} \Big(\frac{z}{2} \Big)^{2k},
\end{equation*}
from which it easily follows that $J_n(-z) = (-1)^n J_n(z)$, and also that there is the following asymptotic behavior for large $n$ and fixed $z$:
\begin{equation}\label{eq:bessel-asymptotic}
\lim_{n \to \infty}  J_n(z) \left( \frac{1}{n!}\left(\frac{z}{2}\right)^n \right) ^{-1}= 1.
\end{equation}

\bigskip
\section{Proof of Theorem~\ref{thm:whenPisapolytope}}

We divide the proof into two parts using the following lemma.

\begin{lemma} \label{lm:zero relation 1, polytopes}
Let $\PP \subset \R^d$ be a polytope oriented in such a way that no edge vector has both of its first two coordinates zero. For each vertex $v \in V(\PP)$, represent its first two coordinates in polar form:
\[
v = (r_v\cos\phi_v, r_v\sin\phi_v, v_3, \dots, v_d). 
\]

Let $Q$ be the intersection of the plane generated by the first two coordinates of $\C^d$, with the null set $N(\PP)$. If $Q$ contains a `circle' 
\[C_\alpha' := \{(\alpha \cos t, \alpha \sin t, 0, \dots, 0) \mid t \in [-\pi, \pi]\}\] 
for some $\alpha \in \C \setminus \{0\}$, then there exist $N$ and coefficients $c_{v,k} \in \C$ for $-N \leq k \leq N$, not all of them zero, so that $\alpha$ satisfies the following identity for every $n \in \Z$:
\begin{equation}\label{identity 1, vanishing Bessels-dimd}
\sum_{v \in V(\PP)} e^{-in \phi_v} \sum_{k = -N}^{N} c_{v,k} \, J_{n-k}(2\pi \alpha r_v)  i^k e^{ik \phi_v } = 0.
\end{equation}
\end{lemma}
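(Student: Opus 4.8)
The plan is to restrict Brion's formula \eqref{eq:fourier-polytope-brion} to the circle $C_\alpha'$, clear the denominators, and then match Fourier coefficients in the angular variable. Parametrize $C_\alpha'$ by $z(t):=(\alpha\cos t,\alpha\sin t,0,\dots,0)$, $t\in\R$. If a vector $w\in\R^d$ has first two coordinates $(\rho\cos\psi,\rho\sin\psi)$ in polar form, then $\innerpsmall{w}{z(t)}=\alpha\rho\cos(t-\psi)$; in particular $\innerpsmall{v}{z(t)}=\alpha r_v\cos(t-\phi_v)$ for each vertex $v$, and each edge vector $w^v_{j,l}$ occurring in \eqref{eq:fourier-polytope-brion} gives $\innerpsmall{w^v_{j,l}}{z(t)}=\alpha\rho^v_{j,l}\cos(t-\psi^v_{j,l})$ with $\rho^v_{j,l}>0$. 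The positivity of the $\rho^v_{j,l}$ is exactly where the orientation hypothesis on $\PP$ enters: it guarantees that none of the linear forms in the denominators of \eqref{eq:fourier-polytope-brion} vanishes identically on the plane $W:=\{(z_1,z_2,0,\dots,0)\in\C^d\}$, so that the restriction of \eqref{eq:fourier-polytope-brion} to $C_\alpha'$ makes sense.

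Set $u:=e^{it}$, so each factor $\cos(t-\psi)=\tfrac12\big(ue^{-i\psi}+u^{-1}e^{i\psi}\big)$ is a nonzero Laurent polynomial in $u$, and let $B(u)$ be the product of all the factors $\cos(t-\psi^v_{j,l})$ that appear in \eqref{eq:fourier-polytope-brion}; it is a nonzero Laurent polynomial. For each vertex $v$ the quantity $G_v(u):=B(u)\,q_v(z(t))$ is then again a Laurent polynomial in $u$, where $q_v$ is the rational function from Theorem~\ref{thm:Brion}. Multiplying \eqref{eq:fourier-polytope-brion} (restricted to $z(t)$) by $B(u)$ yields
\[
B(u)\,\hat 1_\PP\big(z(t)\big)=\sum_{v\in V(\PP)}G_v(u)\,e^{-2\pi i\alpha r_v\cos(t-\phi_v)},
\]
valid for every $t$ with $B(u)\neq 0$, hence for all $t\in\R$ since both sides are continuous. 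Since $C_\alpha'\subset N(\PP)$, the left-hand side is identically $0$, so the right-hand side vanishes identically in $t$.

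Now I would expand the exponentials. Substituting $t\mapsto t+\pi/2$ in \eqref{eq:Bessel-generating} gives $e^{iw\cos\tau}=\sum_{m\in\Z}i^m J_m(w)\,e^{im\tau}$, and together with $J_m(-w)=(-1)^m J_m(w)$ this yields $e^{-2\pi i\alpha r_v\cos(t-\phi_v)}=\sum_{m\in\Z}(-i)^m J_m(2\pi\alpha r_v)\,e^{-im\phi_v}u^m$. Writing $G_v(u)=\sum_{k=-N}^{N}c_{v,k}u^k$, with $N$ large enough to accommodate every $v$ and unused coefficients taken to be $0$, the coefficient of $u^n$ in the right-hand side above is $\sum_{v}\sum_{k}c_{v,k}(-i)^{n-k}J_{n-k}(2\pi\alpha r_v)e^{-i(n-k)\phi_v}$; since the series is identically zero, each of these coefficients is zero. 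Pulling out the common factor $(-i)^n$ and using $(-i)^{-k}=i^k$ turns this into precisely \eqref{identity 1, vanishing Bessels-dimd}, for every $n\in\Z$.

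The step I expect to require the real work is showing that the $c_{v,k}$ are \emph{not all zero}, equivalently that some $G_v\not\equiv 0$. Since $B\not\equiv 0$, if every $G_v$ vanished identically then every $q_v$ would vanish on $C_\alpha'$ away from its poles; but $q_v$ is homogeneous of degree $-d$ and restricts to a genuine rational function on $W$ (again by the orientation hypothesis), so writing $q_v|_W(z_1,z_2)=z_1^{-d}h(z_2/z_1)$ and noting that $z_2/z_1=\tan t$ runs over all of $\R$ along $C_\alpha'$, we would conclude $h\equiv 0$, hence $q_v\equiv 0$ on all of $W$. Then Brion's formula \eqref{eq:Brion} would give $\hat 1_\PP\equiv 0$ on the (dense) regular locus of $W$, hence $\hat 1_\PP\equiv 0$ on $W$ by continuity, contradicting $\hat 1_\PP(0)=\vol(\PP)\neq 0$ and $0\in W$. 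Therefore some $G_v$, and with it some $c_{v,k}$, is nonzero, which completes the proof.
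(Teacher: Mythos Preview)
Your argument is correct and follows the same overall strategy as the paper: restrict Brion's formula to $C_\alpha'$, clear denominators to obtain trigonometric polynomials $p_v(t)$ (your $G_v(u)$), expand the exponentials via the Jacobi--Anger identity, and read off Fourier coefficients. The manipulations and the resulting identity match the paper's exactly (up to harmless overall constants absorbed into the $c_{v,k}$).

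The one place where you genuinely diverge is the proof that not all $c_{v,k}$ vanish. The paper argues constructively: writing $\alpha=re^{i\phi}$, it picks $t_0$ so that $e^{-i\phi}z(t_0)\in\R^d$ is a generic direction, takes $u$ to be the vertex minimizing $\langle x,e^{-i\phi}z(t_0)\rangle$, and observes that at $t_0$ every factor $\langle w^u_{k,l},z(t_0)\rangle$ has positive real multiple, forcing $p_u(t_0)\neq0$. Your argument is instead global and indirect: if every $G_v$ vanished identically then (since $B\not\equiv0$ and the $q_v$ are homogeneous) each $q_v|_W\equiv0$, whence by Brion $\hat 1_\PP$ vanishes on a dense subset of $W$ and so on all of $W$, contradicting $\hat 1_\PP(0)=\vol(\PP)$. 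Both are valid; the paper's argument is more hands-on and pinpoints a specific nonvanishing $p_v$, while yours is slicker and avoids any case analysis, at the cost of invoking the full Brion identity a second time.
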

\begin{proof}
As mentioned in Section~\ref{sec:brion}, Brion's theorem gives us Equation~\eqref{eq:fourier-polytope-brion}, valid for any $z \in \C^d$ that doesn't annul some denominator:
\begin{equation}\label{eq:hatpolytope}
\hat{1}_{\PP}(z) = \sum_{v \in V(\PP)} \sum_{j = 1}^{M_v} \frac{e^{-2\pi i \innerpsmall{v}{z}}}{(2\pi i)^d} \frac{\det K_{v,j}}{\innerp{w_{j,1}^v}{z} \dots \innerp{w_{j,d}^v}{z}}.
\end{equation}

We parameterize $C_\alpha'$ as $z(t) = (z_1, \dots, z_d) \in \C^d$, with
\begin{equation}\label{eq:2d-par}
\begin{aligned}
z_1 = \alpha  \cos t,\quad z_2 = \alpha  \sin t,\quad z_3 = \dots = z_d = 0,
\end{aligned}
\end{equation}
for $t \in (-\pi,\pi]$.

Substituting $\cos t = (e^{i t} + e^{-i t})/2$, $\sin t = (e^{i t} - e^{-i t})/(2i)$ in~\eqref{eq:2d-par} and using the assumption  that the directions $w_{j,l}^v$ do not have both of their first two coordinates equal to zero, we may see each factor $\innerp{w_{j,l}^v}{z(t)}$ as a trigonometric polynomial of degree 1 (that is, a function of the form $c_{-1}e^{-it} + c_0 + c_{1}e^{it}$, with $c_1 \in \C \setminus \{0\}$), as well the product of all these factors
\[
p(t) := \prod_{v \in V(\PP)} \prod_{j = 1}^{M_v} \prod_{l = 1}^d \innerp{w_{j,l}^v}{z(t)},
\]
as a trigonometric polynomial. Multiplying~\eqref{eq:hatpolytope} by $(2\pi i)^d p(t)$ and using the assumption that $\hat 1_{\PP}(z(t)) = 0$ for all $t \in (-\pi, \pi]$, we get 
\begin{equation}\label{eq:hatpolytope2}
0 = \sum_{v \in V(P)} p_v(t)e^{-2\pi i \innerpsmall{v}{z(t)}},
\end{equation}
where each $p_v(t)$ is also a trigonometric polynomial, since the factors in the denominators of~\eqref{eq:hatpolytope} and in $p(t)$ cancel out. We denote the coefficients of $p_v(t)$ by $c_{v,k}$, as follows:
\begin{equation}\label{eq:pv-trigonometric}
p_v(t) := p(t) \sum_{j = 1}^{M_v} \frac{ \det K_{v, j} }{ \innerp{w_{j,1}^v}{z(t)} \dots \innerp{w_{j,d}^v}{z(t)}} = \sum_{k = -N}^N c_{v,k} e^{ikt}.
\end{equation}
Defining
\[
q_v(t) := \prod_{y \in V(P) \setminus\{v\}} \prod_{j = 1}^{M_y} \, \prod_{l = 1}^d \, \innerp{w_{j,l}^y}{z(t)},
\]
we may write $p_v(t)$ as
\begin{equation*}
p_v(t) = q_v(t) \sum_{j = 1}^{M_v} \det K_{v,j}  \prod_{\substack{k = 1\\ k \neq j}}^{M_v} \prod_{l = 1}^d \innerp{w_{k,l}^v}{z(t)}.
\end{equation*}

To confirm that no cancellation happens and that in particular the functions $p_v(t)$ are not all identically zero, observe that because no edge has both of its first two coordinates equal to zero, the intersection between the subspace of $\R^d$ spanned by the first two coordinates and the spaces orthogonal to each edge is a finite set of lines. Letting $\alpha = re^{i\phi}$ with $r > 0$ and $\phi \in (-\pi, \pi]$,  we may also observe that $e^{-i\phi}z(t) \in \R^d$. Thus we can choose $t_0 \in (-\pi, \pi]$ such that $e^{-i\phi}z(t_0)$ is not orthogonal to any edge. If we define 
\[
u := \mathrm{argmin}_{x \in V(\PP)} \innerp{x}{e^{-i\phi}z(t_0)},
\]
then $\innerp{w_{k,l}^u}{e^{-i\phi}z(t_0)} > 0$ for all $k$ and~$l$. Hence 
\[
\sum_{j = 1}^{M_u} \det K_{u,j}  \prod_{\substack{k = 1\\ k \neq j}}^{M_u} \prod_{l = 1}^d \innerp{w_{k,l}^u}{e^{-i\phi}z(t_0)} = e^{-i\phi d(M_u-1)}\sum_{j = 1}^{M_u} \det K_{u,j}  \prod_{\substack{k = 1\\ k \neq j}}^{M_u} \prod_{l = 1}^d \innerp{w_{k,l}^u}{z(t_0)}  > 0,
\]
and therefore $p_u(t)$ is not identically zero. 

Next, we use the generating functions for the Bessel functions~\eqref{eq:Bessel-generating}. To adapt the formulas for our context, we write the first two coordinates of $v$ in polar form: $v = (r_v\cos\phi_v, r_v\sin\phi_v, v_3, \dots, v_d)$, so that 
\[
-\innerp{v}{z(t)} = -\alpha r_v \cos(t - \phi_v) =  \alpha r_v \sin(t - \phi_v -\pi/2).
\]
Hence from \eqref{eq:Bessel-generating} follows
\begin{align*}
e^{-2\pi i \innerpsmall{v}{z(t)}} =  \sum_{n\in \Z}  J_n(2\pi \alpha r_v) e^{in t} e^{-in( \phi_v +\pi/2 )}.
\end{align*}
Substituting into~\eqref{eq:hatpolytope2},
\[
0 = \sum_{n\in \Z}  \sum_{v \in V(P)} p_v(t) e^{-in( \phi_v +\pi/2 )} J_n(2\pi \alpha  r_v) e^{in t}.
\]
Next we substitute formula~\eqref{eq:pv-trigonometric} into $p_v(t)$ and then replace $n$ by $n-k$ in the summation: 
\begin{align*}
0 &= \sum_{n\in \Z} \sum_{v \in V(P)} \sum_{k = -N}^{N}c_{v,k}  e^{-in( \phi_v +\pi/2 )} J_{n}(2\pi \alpha r_v) e^{i(n+k) t}\\
&= \sum_{n\in \Z} \sum_{v \in V(P)} \sum_{k = -N}^{N}    c_{v,k}  e^{-i(n-k)( \phi_v +\pi/2 )} J_{n-k}(2\pi \alpha r_v) e^{i n t}.  
\end{align*}

The last expression is the Fourier series of the resulting function in $t \in (-\pi, \pi]$, and therefore all of the coefficients must vanish:
\[
\sum_{v \in V(P)} e^{-in \phi_v} \sum_{k = -N}^{N} c_{v,k} \, J_{n-k}(2\pi \alpha r_v)  e^{ik (\phi_v + \pi/2) } = 0.
\]
Using $e^{ik \pi/2} = i^k$, we get the formula from the statement.\qedhere
\end{proof}

\medskip

To prove Theorem~\ref{thm:whenPisapolytope} we will now analyze Equation~\eqref{identity 1, vanishing Bessels-dimd} for large $n$ and determine the asymptotically dominant terms.

\begin{proof}[Proof of Theorem~\ref{thm:whenPisapolytope}]
Let $\PP \subset \R^d$ be a $d$-dimensional polytope, $H$ be a 2-dimensional subspace not orthogonal to any edge from $\PP$ and $e, f \in \R^d$ which form an orthogonal basis for $H$. Suppose, by way of contradiction, that $N(\PP)$ does contain a `circle' $C_\alpha := \{ \alpha (\cos t) e + \alpha (\sin t) f \in \C^d \mid t \in (-\pi, \pi] \}$ for some $\alpha \in \C \setminus \{0\}$.

We may consider a rotation $R$ that sends $H$ to the plane spanned by the first two coordinates of $\R^d$ and observe that $N(\PP)$ contains $C_\alpha$ if and only if $N(R\PP)$ contains $C_\alpha' := \{(\alpha \cos t, \alpha \sin t, 0, \dots, 0) \mid t \in [-\pi, \pi]\}$. The assumption that $H$ is not orthogonal to any edge gets translated to the assumption that no direction $Rw_{j,l}^v$ has both of its first two coordinates equal to zero, and hence we have satisfied the hypotheses of Lemma~\ref{lm:zero relation 1, polytopes}. For simplicity,  we henceforth omit the rotation $R$ and we assume that $\PP$ and $H$ already have this  orientation, in particular $C_\alpha = C_\alpha'$.

By Lemma~\ref{lm:zero relation 1, polytopes}, we know that identity~\eqref{identity 1, vanishing Bessels-dimd} must be true. Since not all of the coefficients $c_{v,k}$ are zero, we may assume that $N$ is the highest degree of a nonzero coefficient and we let $u \in V(\PP)$ be such that $c_{u,N} \neq 0$.  Because a translation of the polytope by a vector $c \in \R^d$ implies that $\hat{1}_{\PP+c}(z) = \hat{1}_{\PP}(z)e^{-2\pi i \innerpsmall{z}{c}}$, we may translate the polytope while preserving the assumption that its null set contains $C_\alpha$. By translating $P$ in the direction of $u$, we may assume that $u = \arg\max_{v \in V} r_v$ and that $u$ is the only vertex that attains this maximum. 

Using the asymptotic~\eqref{eq:bessel-asymptotic} for $J_n(z)$, we have:
\begin{equation}\label{eq:bessel-limit}
\begin{aligned}
\lim_{n \to \infty} \frac{(n-k)!2^{n-k}}{(2\pi r_u\alpha)^{n-k}} J_{n-l}(2\pi r_v\alpha) 
&= \lim_{n \to \infty} \frac{(n-k)!2^{n-k}}{(2\pi r_u\alpha)^{n-k}} \frac{(2\pi r_v\alpha)^{n-l}}{(n-l)!2^{n-l}}\\
&=  
\begin{cases}
1 \text{ if } k = l \text{ and } u = v,\\
0 \text{ if } k < l \text{ or } (k = l \text{ and } u \neq v).
\end{cases}
\end{aligned}
\end{equation}

For any $n > N$, we would like to focus on the unique dominant term of~\eqref{identity 1, vanishing Bessels-dimd}, which grows with $n$ as $\frac{1}{(n-N)!}\left(\frac{2\pi r_u\alpha}{2}\right)^{n-N}$. To be more precise, we  multiply Equation~\eqref{identity 1, vanishing Bessels-dimd} by $e^{in\phi_u}\frac{(n-N)!2^{n-N}}{(2\pi r_u\alpha)^{n-N}}$ to get:
\[
\sum_{v \in V(P)} e^{-in (\phi_v - \phi_u)} \sum_{k = -N}^{N} c_{v,k} \, \frac{(n-N)!2^{n-N}}{(2\pi r_u\alpha)^{n-N}} J_{n-k}(2\pi r_v\alpha)  i^k e^{ik \phi_v } = 0.
\]
Taking the limit as $n \to \infty$, \eqref{eq:bessel-limit} tells us that all terms with $k < N$ and $v \neq u$ tend to $0$, leaving us with only the $k=N$ term:
\[
c_{u, N}\, i^{N}\, e^{iN\phi_u} = 0,
\]
implying that $c_{u,N} = 0$,  a contradiction.

Therefore we conclude that no $\alpha$ can satisfy Equation~\eqref{identity 1, vanishing Bessels-dimd} for every $n$ and hence by Lemma~\ref{lm:zero relation 1, polytopes}, $N(\PP)$ cannot contain $C_\alpha$ for any plane $H$ that is not orthogonal to any edge of $\PP$.
\end{proof}

\bibliographystyle{plain}

\end{document}